\DeclareMathAlphabet{\pazocal}{OMS}{zplm}{m}{n}
\DeclareMathAlphabet{\mathcalligra}{T1}{calligra}{m}{n}
\newtheorem{theorem}{Theorem}
\newtheorem{lemma}{Lemma}
\newtheorem{definition}{Definition}
\newtheorem{remark}{Remark}
\newtheorem{proposition}{Proposition}
\newtheorem{corollary}{Corollary}
\newcommand{\leqnomode}{\tagsleft@true}
\newcommand{\reqnomode}{\tagsleft@false}
\def\({\begin{eqnarray}}
\def\){\end{eqnarray}}
\def\[{\begin{eqnarray*}}
\def\]{\end{eqnarray*}}
\def\part#1#2{\frac{\partial #1}{\partial #2}}
\title{A simple consensus model for an increasing population of agents with i.i.d incoming opinions.}
\author{Ioannis Markou}
\begin{document}

\maketitle

\begin{abstract}
In this short note we study what happens in a symmetric opinion model when we send the total interacting population $N(t)$ to infinity as $t \to \infty$. We assume that new population enters the system with opinions that are i.i.d random vectors with finite mean and variance. We give sharp conditions on the rate of population growth that is required for convergence to a global consensus in opinions. More particularly, we show that if the total population increases at a rate $N(t)\sim e^{t^\alpha}$, then $\alpha<1$ is necessary and sufficient condition for convergence to the mean of incoming opinions, and the convergence is achieved at an algebraic rate.
\end{abstract}

\textbf{Keywords:} Opinion dynamics, population growth, long time asymptotics.

\textbf{AMS subject classification:} 60G50, 91D30, 92D25.

\section{Introduction}
\label{sec:Introduction}

In the past few decades the interest in the study of social dynamics and more particularly in opinion modeling has re-emerged. Opinion models are first order models where the unknown variables represent opinions for a fixed number of agents that change dynamically according to some rule. Depending on the network structure, each opinion may be affected by either the neighboring opinions i.e. opinions that are not too distant (see \cite{HegKraus, Kraus}), or by all other opinions with some appropriate weight in networks with all-to-all interaction topology. For the insterested reader we suggest the review articles \cite{AyCa, CaFoLo}, and the excellent bibliographic survey in \cite{MasVelNal} with a list of some of the most important contributions.

The purpose of this work is to study the role of population growth in a simple symmetric opinion model with an all-to-all interaction topology. We study how sending the number of agents to infinity 	affects the long time asymptotic behavior of the system. In particular, we find sharp conditions for the emergence of consensus, when we keep adding agents in the system with incoming opinions that are independent and identically distributed (i.i.d). We should point out that the study of population growth in consensus dynamics and collective behavior more generally is a novel idea \cite{Mar}.

\section{Preliminaries and Main results}
\label{Preliminaries}

We start by assuming an increasing population $N(t):[0,\infty) \to \mathbb{N}$ that counts the number of agents at time $t$, which for the sake of this work we consider to be unbounded so that $N(t) \xrightarrow{t \to \infty}\infty$. We also assume an increasing, unbounded sequence of times $\{ t_j\}_{j=0}^\infty$, with $t_0=0$ s.t. $N(t)$ increases by a unit at each $t_j$ starting from $N(t_0)=N_0$. In short, the population is given by
\begin{equation} \label{eq:population} N(t):=N_0+ \sum_{j=0}^\infty j \chi_{[t_j,t_{j+1})}(t) , \end{equation}
where $\chi_A(t)$ is the characteristic function on a set $A \subset \mathbb{R}$.

In our model we study the evolution in time of an opinion vector $x_i(t) \in \mathbb{R}^d$ for agent $i$, where the index $i$ takes values over the existing population $N(t)$. For each time interval $[t_k,t_{k+1})$ in which the population $N(t)$ remains fixed, every opinion is updated continuously in time by averaging all the existing opinions of other agents with respect to a symmetric weight $\psi(|x_i-x_j|)$. The interaction weight $\psi(|x_i-x_j|)$ is a function of the euclidean distance $|\cdot|$ in $\mathbb{R}^d$ between the opinions of agents $i$, $j$. For the purpose of this paper the weight function $\psi(r)$ is only assumed to be a Lipschitz function $\psi(r):\mathbb{R}_+ \to \mathbb{R}_+ $ which is bounded below and above, i.e. $\psi_* \leq \psi(r)\leq \psi_M$. Hence, we end up with the system
\begin{equation} \label{eq:main} \dot{x}_i(t)=\frac{1}{N(t)} \sum \limits_{j=1}^{N(t)}\psi(|x_j(t)-x_i(t)|)(x_j(t)-x_i(t)) ,\quad 1 \leq i \leq N(t), \quad t \geq 0 .\end{equation}
The initial conditions of the model for $t_0=0$ are $x_j^0$ for $1 \leq j \leq N_0$. At each time step $t_k$ for $k \geq 1$, agent $N(t_k)$ is introduced with an opinion that is a function $x_{N(t_k)}:[t_k,\infty) \to \mathbb{R}^d$ having initial condition $X_k$. Overall, the initial conditions are
\begin{equation} \label{eq:IC}x_j(t_0)=x_j^0 \quad \text{for} \quad 1 \leq j \leq N_0, \quad \text{and}\quad x_{N(t_k)}(t_k)=X_k , \quad k \geq 1.\end{equation}

We assume that the incoming opinions $\{X_k \}_{k \geq 1}$ are an infinite sequence of random vectors which are i.i.d with finite mean $\mathbb{E}(X_k)=m \in \mathbb{R}^d$, and Variance $\mathrm{Var}(X_k)=\sigma^2>0$. Here we define the notion of variance $\mathrm{Var}(X_k)$ for the random vectors $X_k$ as the trace of the covariance matrix $\mathrm{Cov}(X_k)=\mathbb{E}((X_k-\mathbb{E}(X_k))(X_k-\mathbb{E}(X_k))^T)$ i.e. $\mathrm{Var}(X_k)=tr(\mathrm{Cov}(X_k))=\mathbb{E}(|X_k -m|^2)=\sigma^2$.
Next, we give the definition of the notion of convergence we use.
\begin{definition} Let $N(t)$ a population that evolves according to \eqref{eq:population}, and assume a classical solution to \eqref{eq:main}-\eqref{eq:IC}. We say that the sequence converges in the \textbf{mean variance} sense to the average $m$ iff \begin{equation*} \mathbb{E}\left( \frac{1}{N(t)}\sum_{j=1}^{N(t)}|x_j(t)-m|^2\right) \xrightarrow{t \to \infty} 0 . \end{equation*}
The expectation $\mathbb{E}(\cdot)$ is the expectation of the stochastic process $\{x_j(t)\}_{j=1}^{N(t)}$.
\end{definition}
In the following result we give sharp conditions on the rate of $N(t)$ growth, i.e. the distribution of the $t_k$ values at infinity so that we have convergence in the sense defined above.
\begin{theorem} \label{theorem:Main} We consider an increasing population $N(t):[0,\infty) \to \mathbb{N}$ that follows \eqref{eq:population}, for some increasing, unbounded, sequence of times $\{ t_j\}_{j=1}^\infty$. We also assume a classical solution of \eqref{eq:main} - \eqref{eq:IC} and that the incoming opinions $X_k$ are i.i.d. random vectors with finite mean $\mathbb{E}(X_k)=m$ and variance $\mathrm{Var}(X_k)=\sigma^2$. Then we have \textbf{mean variance} convergence of the opinions to the average opinion of incoming agents $m$ if condition
\begin{equation}  \label{eq:condition1}\sum_{k=1}^n \frac{1}{k} e^{-\psi_* (t_n-t_k)} \xrightarrow{n \to \infty} 0  \tag{C1} \end{equation} holds. Moreover, mean variance convergence fails if
\begin{equation} \label{eq:condition2}\limsup \limits_{n \to \infty} \sum_{k=1}^n \frac{1}{k} e^{-\psi_M (t_n-t_k)} >0 . \tag{C2}\end{equation}
\end{theorem}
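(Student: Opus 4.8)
The natural first step is to split the quantity defining mean variance convergence into a barycentric part and a fluctuation part. Set $\bar x(t)=\frac1{N(t)}\sum_{i=1}^{N(t)}x_i(t)$. Since $\psi$ is symmetric, summing \eqref{eq:main} over $i$ on an interval $[t_k,t_{k+1})$ where $N(t)$ is frozen gives $\frac{d}{dt}\sum_i x_i(t)=0$, so $\sum_i x_i$ is constant there and jumps by exactly $X_{k+1}$ at $t_{k+1}$; hence on $[t_k,t_{k+1})$ one has $\bar x(t)=\frac{1}{N_0+k}\bigl(\sum_{i=1}^{N_0}x_i^0+\sum_{j=1}^k X_j\bigr)$, and using $\mathbb E(X_j)=m$ together with independence, $\mathbb E|\bar x(t)-m|^2=O(1/k)\to0$ \emph{regardless} of the growth rate. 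Because $\frac1{N}\sum_i|x_i-m|^2=V(t)+|\bar x(t)-m|^2$ with $V(t):=\frac1{N(t)}\sum_i|x_i(t)-\bar x(t)|^2$, the whole question reduces to deciding whether $\mathbb E V(t)\to0$.

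Next I would run a Grönwall estimate for $W(t):=N(t)V(t)=\sum_i|x_i-\bar x|^2$ on each frozen interval. Differentiating, symmetrizing in $i\leftrightarrow j$, and using $\sum_{i,j}|x_i-x_j|^2=2N W$ yields $\dot W=-\frac1N\sum_{i,j}\psi(|x_i-x_j|)\,|x_i-x_j|^2$, so $\psi_*\le\psi\le\psi_M$ gives the pathwise two‑sided bound $e^{-2\psi_M(t-t_k)}\le W(t)/W(t_k^+)\le e^{-2\psi_*(t-t_k)}$ on $[t_k,t_{k+1})$; in particular $W$ (hence $V$) is nonincreasing between jumps. At $t_{k+1}$ the newcomer $X_{k+1}$ joins a cloud of $N=N_0+k$ agents with barycenter $\bar x^-$, and a direct computation gives the exact relation $W(t_{k+1}^+)=W(t_{k+1}^-)+\frac{N}{N+1}|X_{k+1}-\bar x^-|^2$. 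Taking expectations and using that $\bar x^-$ is $\sigma(X_1,\dots,X_k)$‑measurable, hence independent of $X_{k+1}$, one finds $\mathbb E\bigl|X_{k+1}-\bar x^-\bigr|^2=\sigma^2+\mathbb E|\bar x^--m|^2$, a quantity bounded above and below by positive constants. So each jump injects $\asymp\sigma^2$ into $\mathbb E W$, i.e.\ $\asymp\sigma^2/N(t_{k+1})$ into $\mathbb E V$.

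Writing $w_k:=\mathbb E W(t_k^+)$ and combining the interval estimate with the jump relation gives the recursions $w_{k+1}\le e^{-2\psi_*(t_{k+1}-t_k)}w_k+C$ and $w_{k+1}\ge e^{-2\psi_M(t_{k+1}-t_k)}w_k+\tfrac{\sigma^2}{2}$. Unrolling (the exponents telescope to $t_n-t_k$) and dividing by $N(t_n)=N_0+n$,
\begin{equation*}
\frac{\sigma^2}{2(N_0+n)}\sum_{k=1}^{n}e^{-2\psi_M(t_n-t_k)}\ \le\ \mathbb E V(t_n)\ \le\ \frac{w_0\,e^{-2\psi_* t_n}+C\sum_{k=1}^{n}e^{-2\psi_*(t_n-t_k)}}{N_0+n}.
\end{equation*}
For sufficiency, since $\frac1{N_0+n}\le\frac1k$ and $e^{-2\psi_*s}\le e^{-\psi_*s}$, the right‑hand side is dominated by $w_0 e^{-2\psi_* t_n}+C\sum_{k=1}^n\frac1k e^{-\psi_*(t_n-t_k)}$, which tends to $0$ under \eqref{eq:condition1}; as $V$ is nonincreasing on $[t_n,t_{n+1})$ this upgrades to $\mathbb E V(t)\to0$, hence to mean variance convergence. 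For the failure direction one argues that \eqref{eq:condition2} forces the left‑hand side to stay bounded away from $0$ along a subsequence of the $t_n$'s, so $\mathbb E V(t_n)\not\to0$.

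The main obstacle is precisely the bookkeeping around the jumps in the last step: each newcomer contributes a fixed amount of disorder, but this is instantly diluted by the ever‑growing denominator $N(t)$, and one has to track how the telescoping dilution factors interact with the exponential relaxation so that the recursion reproduces exactly the weighted sums in \eqref{eq:condition1}--\eqref{eq:condition2} — in particular, for the failure direction, comparing the lower bound $\frac1{N(t_n)}\sum_k e^{-2\psi_M(t_n-t_k)}$ with the sum in \eqref{eq:condition2}. Deducing the advertised sharp threshold then amounts to substituting $t_k\sim(\log k)^{1/\alpha}$ and checking that $\sum_k\frac1k e^{-\psi_*(t_n-t_k)}\to0$ iff $\alpha<1$, with an algebraic rate; by contrast the symmetrization, the Grönwall step, and the barycenter computation are routine.
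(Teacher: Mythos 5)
Your strategy coincides with the paper's: reduce mean--variance convergence to $\mathbb{E}V(t)\to 0$ via $\frac1N\sum_i|x_i-m|^2=V(t)+|\bar x(t)-m|^2$, then combine a piecewise Gr\"onwall estimate with a jump analysis at each $t_k$ (the paper's $\pazocal{V}$ is your $V$, and its Lemma \ref{lem:Main_lemma} is your ``unrolling''). Your sufficiency half is correct: the jump identity $W(t_{k+1}^+)=W(t_{k+1}^-)+\frac{N}{N+1}|X_{k+1}-\bar x^-|^2$ and the independence computation $\mathbb{E}|X_{k+1}-\bar x^-|^2=\sigma^2+\mathbb{E}|\bar x^--m|^2$ check out, and the dominations $\frac{1}{N_0+n}\le\frac1k$ and $e^{-2\psi_*s}\le e^{-\psi_*s}$ both go the right way for an upper bound, so \eqref{eq:condition1} does give $\mathbb{E}V(t)\to0$. (You are in fact more explicit than the paper about adding back the barycentric term $|\bar x-m|^2$, which it treats separately in its Corollary.)

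The failure direction, however, contains a genuine gap — the very one you flag as ``the main obstacle'' and do not close. Your lower bound $\mathbb{E}V(t_n)\ge\frac{\sigma^2}{2(N_0+n)}\sum_{k=1}^{n}e^{-2\psi_M(t_n-t_k)}$ is \emph{strictly weaker} than the quantity in \eqref{eq:condition2} on two counts: $\frac{1}{N_0+n}\le\frac1k$ for every $k\le n$, and $e^{-2\psi_M s}\le e^{-\psi_M s}$. Hence $\limsup_n\sum_k\frac1k e^{-\psi_M(t_n-t_k)}>0$ does not force your lower bound to stay away from $0$; for instance, if the surviving mass of the \eqref{eq:condition2} sum sits at indices $k\asymp n/\log n$, replacing the weight $\frac1k$ by $\frac1{n}$ annihilates it. The weight $\frac1k$ is lost because in the $W=NV$ bookkeeping you dilute by $N(t_n)$ once, at the end, whereas the disorder injected by agent $k$ should be diluted by $N(t_k)\approx k$. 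The paper recovers the correct weight by working with the normalized variance throughout: its jump bound is $\Delta_j\mathbb{E}\pazocal{V}(t_k)\ge\frac{1}{N_0+k}\bigl(c_k\sigma^2-\mathbb{E}\pazocal{V}(t_k^-)\bigr)$ with $c_k\to1$, and it argues by contradiction — if $\mathbb{E}\pazocal{V}(t_n)\to0$ then $\mathbb{E}\pazocal{V}(t_k^-)<\epsilon$ eventually, so each jump contributes at least $\frac{1}{2k}(\frac{\sigma^2}{2}-\epsilon)$, and the reverse Gr\"onwall reproduces exactly the sum in \eqref{eq:condition2}, contradicting it. To repair your argument, write the lower recursion for $v_k:=\mathbb{E}V(t_k)$ directly, $v_{k}\ge\bigl(1-\tfrac{1}{N_0+k}\bigr)e^{-2\psi_M(t_k-t_{k-1})}v_{k-1}+\tfrac{c_k\sigma^2}{N_0+k}$, so that the per-term weight $\frac1k$ survives the telescoping. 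Separately, note that your (correct) sharp dissipation rate is $2\psi$ rather than $\psi$, since $\sum_{i,j}|x_i-x_j|^2=2N^2V$; this is harmless for \eqref{eq:condition1} but means your computation would at best establish failure under \eqref{eq:condition2} with $2\psi_M$ in the exponent, a discrepancy with the stated theorem that you would need to address.
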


In the result that follows we show that for every sub-exponential population growth we do have average mean square convergence to $m$, wheareas we don't in the case of exponential or faster.

\begin{theorem} We assume the same assumptions as in Theorem \ref{theorem:Main}. We also assume that the population $N(t)$ grows at a general exponential rate, i.e $N(t)= \lfloor e^{t^\alpha} \rfloor$, for some $\alpha>0$. Then we have convergence in the average mean square sense iff the growth is sub-exponential, i.e. for $\alpha<1$. Furthermore, for any $\beta<1-\alpha$ there exists some $T_0=T_0(a,\psi_*,\beta^*)>0$ s.t we have the convergence rate
\begin{equation} \mathbb{E}\left( \frac{1}{N(t)}\sum_{j=1}^{N(t)}|x_j(t)-m|^2\right) < O \left(\frac{1}{t^{\beta}} \right), \qquad t \geq T_0 . \end{equation}
\end{theorem}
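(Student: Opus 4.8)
The plan is to reduce both claims to Theorem~\ref{theorem:Main}: once the jump times $t_k$ are made explicit, the two sums in \eqref{eq:condition1}--\eqref{eq:condition2} become concrete expressions amenable to a Laplace-type estimate. From $N(t)=\lfloor e^{t^{\alpha}}\rfloor$ one reads off that the agent with index $N_0+k$ enters exactly when $\lfloor e^{t^{\alpha}}\rfloor$ first reaches $N_0+k$, so, up to the harmless off-by-one created by the floor,
\begin{equation*}
t_k=\bigl(\ln(N_0+k)\bigr)^{1/\alpha},\qquad t_n-t_k=\bigl(\ln(N_0+n)\bigr)^{1/\alpha}-\bigl(\ln(N_0+k)\bigr)^{1/\alpha}.
\end{equation*}
Writing $L:=\ln(N_0+n)$ and comparing a $\tfrac1k$-weighted sum with the integral $\int\tfrac{dx}{x}$ (i.e. substituting $x=e^{u}$) one gets, for $\psi\in\{\psi_*,\psi_M\}$,
\begin{equation*}
S_n(\psi):=\sum_{k=1}^{n}\frac1k\,e^{-\psi(t_n-t_k)}\ \asymp\ \int_{0}^{L}e^{-\psi\left(L^{1/\alpha}-u^{1/\alpha}\right)}du\ =\ e^{-\psi L^{1/\alpha}}\int_{0}^{L}e^{\psi u^{1/\alpha}}\,du .
\end{equation*}

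The first substantive step is the asymptotics of this last integral. Since $\tfrac{d}{du}u^{1/\alpha}=\tfrac1\alpha u^{1/\alpha-1}$, the integrand $e^{\psi u^{1/\alpha}}$ concentrates at the endpoint $u=L$ with effective width of order $\bigl(\tfrac1\alpha L^{1/\alpha-1}\bigr)^{-1}=\alpha L^{\,1-1/\alpha}$ when $\alpha<1$, whereas for $\alpha\ge1$ the linearization $L^{1/\alpha}-u^{1/\alpha}\approx\tfrac1\alpha L^{1/\alpha-1}(L-u)$ is exact ($\alpha=1$) or has width $\Theta(1)$. Splitting the integral at $u=L-c\,L^{\,1-1/\alpha}$, using the linearization on the upper piece and crude exponential bounds on the remainder, one obtains
\begin{equation*}
S_n(\psi)\ \asymp\ L^{\,1-1/\alpha}=\bigl(\ln(N_0+n)\bigr)^{1-1/\alpha},\qquad n\to\infty,
\end{equation*}
with constants depending only on $\alpha$ and $\psi$. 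Consequently: if $\alpha<1$ then $1-\tfrac1\alpha<0$, so $S_n(\psi_*)\to0$, condition \eqref{eq:condition1} holds, and Theorem~\ref{theorem:Main} gives mean variance convergence to $m$; if $\alpha\ge1$ then $1-\tfrac1\alpha\ge0$, so $S_n(\psi_M)$ stays bounded below by a fixed positive constant (and diverges for $\alpha>1$), condition \eqref{eq:condition2} holds, and mean variance convergence fails. This is the ``if and only if''.

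For the rate I would use the quantitative version of Theorem~\ref{theorem:Main}: tracking constants in its proof (a Gr\"onwall/energy estimate for $\mathbb{E}\bigl(\sum_{j}|x_j-\bar x|^{2}\bigr)$ on each interval $[t_k,t_{k+1})$ together with the variance $\sigma^2$ injected at each $t_k$) yields a bound of the shape
\begin{equation*}
\mathbb{E}\!\left(\frac{1}{N(t)}\sum_{j=1}^{N(t)}|x_j(t)-m|^2\right)\ \le\ C_1\,e^{-2\psi_* t}\,[\text{initial data}]\ +\ C_1\,S_{n(t)}(\psi_*)\ +\ \frac{C_2\,\sigma^2}{N(t)},
\end{equation*}
where $n(t)$ is the index with $t\in[t_{n(t)},t_{n(t)+1})$ (on each such interval the left-hand side is non-increasing apart from the $O(1/N(t))$ contribution of $|\bar x-m|^2$, so it suffices to control it at the $t_k$). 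Since $N(t)=\lfloor e^{t^{\alpha}}\rfloor$ gives $\ln N(t)=t^{\alpha}+O(e^{-t^{\alpha}})$, we get $S_{n(t)}(\psi_*)\lesssim\bigl(\ln N(t)\bigr)^{1-1/\alpha}\lesssim t^{\alpha(1-1/\alpha)}=t^{-(1-\alpha)}$; for any $\beta<1-\alpha$ this is $O(t^{-\beta})$, and the exponentially small initial term together with $C_2\sigma^2/N(t)=O(e^{-t^{\alpha}})$ is absorbed once $t\ge T_0$ for a suitable $T_0=T_0(\alpha,\psi_*,\beta)$. (The slack between $\beta<1-\alpha$ and the ``true'' exponent $1-\alpha$ is only there to avoid carrying the constant and the $(1+o(1))$ in $S_n(\psi_*)\asymp(\ln(N_0+n))^{1-1/\alpha}$.)

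I expect the main obstacle to be the two-sided Laplace estimate of $S_n(\psi)$: justifying the endpoint linearization of $u\mapsto u^{1/\alpha}$ on the correct window and showing the complementary tail is of strictly lower order, and---on the failure side---extracting a \emph{clean} positive lower bound for $S_n(\psi_M)$ when $\alpha\ge1$ so that \eqref{eq:condition2} is genuinely activated. The remaining pieces (identifying the $t_k$, the index-to-time conversion, and absorbing the exponentially small terms) are routine.
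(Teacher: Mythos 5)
Your proposal is correct and follows essentially the same route as the paper: identify $t_k=(\ln(N_0+k))^{1/\alpha}$, compare the sum in (C1)/(C2) with the integral $e^{-\lambda L^{1/\alpha}}\int_0^L e^{\lambda u^{1/\alpha}}\,du$ via the monotonicity of $t\mapsto\frac{1}{t}e^{\lambda(\ln t)^{1/\alpha}}$, and read off convergence iff $1/\alpha>1$ together with the rate $t^{-(1-\alpha)}$. The only cosmetic difference is that you extract the asymptotics $S_n\asymp L^{1-1/\alpha}$ by a hands-on Laplace endpoint analysis, whereas the paper identifies the integral as the generalized Dawson function and settles its limit by L'H\^opital; both yield the same conclusion and the same convergence exponent.
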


\section{Moments and Variance estimates}
\label{sec:Estimates}

\subsection{Moments of the model}
We define the first moments of our model at any time $t$ by taking the average over population $N(t)$, i.e.
\begin{equation} \label{eq:moments} m_0(t):=\frac{1}{N(t)}\sum \limits_{j=1}^{N(t)}1 , \quad m_1(t):=\frac{1}{N(t)}\sum \limits_{j=1}^{N(t)} x_j(t), \quad   m_2(t):=\frac{1}{N(t)}\sum \limits_{j=1}^{N(t)} |x_j(t)|^2 .\end{equation}
We know that the symmetry in the interactions between two agents $i$ and $j$ implies \begin{equation*} \dot{m}_1(t)=0 , \qquad \dot{m}_2(t)=D(t), \qquad t \in (t_k, t_{k+1}), \end{equation*}
with $D(t)$ being the dissipation defined by \begin{equation*} D(t)=-\frac{1}{(N_0+k)^2}\sum_{i=1}^{N_0+k}\sum_{j=1}^{N_0+k} \psi(|x_j(t)-x_i(t)|)|x_j(t)-x_i(t)|^2 , \quad t \in (t_k, t_{k+1}).\end{equation*}

As a new agent is introduced every $t_k$ time, the macroscopic moments $m_1(t)$ and $m_2(t)$ become discontinuous at these points. In order to study these discontinuities we have to introduce the notation for the jump of a piecewise continuous function $h(t)$ at $t_k$. This is defined by $\Delta_j h(t_k):=h(t_k^+)-h(t_k^-)$, with $h(t_k^+)$ and $h(t_k^-)$ being the the right hand and left hand limits of $h$ at $t_k$. More particularly, when $h(t)$ is continuous from the right, then $\Delta_j h(t_k):=h(t_k)-h(t_k^-)$.
Now we can show
\begin{proposition} \label{thm:prop1}
Assume a population $N(t)$ that increases according to \eqref{eq:population} and a classical solution of \eqref{eq:main} - \eqref{eq:IC}. The moments defined in \eqref{eq:moments} satisfy for any $t \in [t_k, t_{k+1})$
\begin{equation*} \begin{aligned} (i) \quad m_0(t)&=1, \qquad (ii) \quad m_1(t)=\frac{1}{N_0+k}\left( \sum_{j=1}^{N_0}x_j^0 +\sum_{j=1}^k X_j\right) ,  \\  (iii) \quad m_2(t)&= \frac{1}{N_0+k}\left( \sum_{j=1}^{N_0}|x_j^0|^2+\sum_{j=1}^k |X_k|^2\right) +\sum_{j=0}^{k-1}\frac{N_0+j}{N_0+k} \int_{t_j}^{t_{j+1}} D(s) \, ds +\int_{t_k}^t D(s) \, ds .  \end{aligned} \end{equation*}
\end{proposition}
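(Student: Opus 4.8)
The plan is to prove the three identities by induction on $k$, using the known ODE facts $\dot m_1(t)=0$ and $\dot m_2(t)=D(t)$ on each interval $(t_k,t_{k+1})$, together with a careful bookkeeping of the jumps $\Delta_j m_1(t_k)$ and $\Delta_j m_2(t_k)$ caused by the insertion of a new agent.

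First I would dispose of $(i)$: since $N(t)$ is constant on $[t_k,t_{k+1})$, $m_0(t)=\frac{1}{N(t)}\sum_{j=1}^{N(t)}1=1$ trivially. Next, for $(ii)$, on the open interval $(t_k,t_{k+1})$ the symmetry of $\psi(|x_i-x_j|)(x_j-x_i)$ under $i\leftrightarrow j$ gives $\dot m_1(t)=0$, so $m_1$ is constant there; the only thing to track is the jump at each $t_j$. At $t_k$ the population goes from $N_0+k-1$ to $N_0+k$ and a new opinion $X_k$ is appended, so
\[
m_1(t_k^+)=\frac{1}{N_0+k}\Bigl((N_0+k-1)\,m_1(t_k^-)+X_k\Bigr).
\]
Starting from $m_1(t_0^+)=\frac{1}{N_0}\sum_{j=1}^{N_0}x_j^0$ and iterating this recursion — note the telescoping of the factors $(N_0+j)$ — yields exactly $(ii)$. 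One should check the base case $k=0$ separately, where the sum $\sum_{j=1}^0 X_j$ is empty.

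For $(iii)$ I would combine two contributions. On $(t_j,t_{j+1})$ we have $m_2(t_{j+1}^-)=m_2(t_j^+)+\int_{t_j}^{t_{j+1}}D(s)\,ds$, and at $t_{j+1}$ the jump from appending $X_{j+1}$ to an $(N_0+j)$-agent population is
\[
m_2(t_{j+1}^+)=\frac{1}{N_0+j+1}\Bigl((N_0+j)\,m_2(t_{j+1}^-)+|X_{j+1}|^2\Bigr).
\]
Unrolling this mixed recursion from $t_0$ to $t_k$ and then adding the final stretch $\int_{t_k}^t D(s)\,ds$, the weights in front of the previously accumulated dissipation integrals collapse to the stated $\frac{N_0+j}{N_0+k}$, and the initial data and the $|X_j|^2$ terms each get a $\frac{1}{N_0+k}$ prefactor. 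I expect the main (though still routine) obstacle to be precisely this weight-tracking: making sure that after all the rescalings $\frac{N_0+j}{N_0+j+1}$ compose correctly into $\frac{N_0+j}{N_0+k}$ for the $D$-integral over $[t_j,t_{j+1}]$ and into $\frac{1}{N_0+k}$ for the initial/incoming square terms, and handling the empty-sum conventions at $k=0$ so the induction base is consistent. I would also remark that the index on $|X_k|^2$ inside the first bracket of $(iii)$ should read $|X_j|^2$ under the summation, which the induction makes manifest.
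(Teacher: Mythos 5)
Your proposal is correct and follows essentially the same route as the paper: induction on $k$ using the jump recursions $m_1(t_k)=\frac{1}{N_0+k}\bigl((N_0+k-1)m_1(t_k^-)+X_k\bigr)$ and its analogue for $m_2$, combined with $\dot m_1=0$ and $\dot m_2=D$ on each interval. Your observation that the summand $|X_k|^2$ in $(iii)$ should read $|X_j|^2$ is also correct; this is a typo in the statement that the induction indeed makes manifest.
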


\begin{proof}
We have that $m_0(t)=1$ for all $t \geq 0$ by definition. Now in terms of $m_1(t)$, this is a moment that is not presrved unlike the symmetric fixed population model.
The jump of $m_1(t)$ at $t_k$ is
$ \Delta_j m_1(t_k)=m_1(t_k)-m_1(t_k^-)$.
We can compute the value of $\Delta_j m_1(t_k)$ using the property $x_j(t_k)=x_j(t_k^-)$ for  $j=1,\ldots,N_0+k-1$, i.e.
\begin{equation} \label{eq:m_1_jump_formula}\begin{aligned} \Delta_j m_1(t_k)&=\frac{1}{N_0+k}\sum_{j=1}^{N_0+k}x_j(t_k)-m_1(t_k^-)=\frac{1}{N_0+k} \left(\sum_{j=1}^{N_0+k-1}x_j(t_k^-)+X_k \right) -m_1(t_k^-) \\ &=\frac{1}{N_0+k}\left( (N_0+k-1)m_1(t_k^-)+X_k\right)-m_1(t_k^-)=\frac{1}{N_0+k}X_k-\frac{1}{N_0+k}m_1(t_k^-) .\end{aligned} \end{equation}
We have that $m_1(t_k^-)=m_1(t_{k-1})$ (since $\dot{m}_1(t)=0$ on $(t_{k-1},t_k)$). Then using \eqref{eq:m_1_jump_formula} we get
\begin{equation*}  m_1(t_k)=m_1(t_k^-)+\Delta_j m_1(t_k)=\frac{1}{N_0+k} X_k +\left(1-\frac{1}{N_0+k} \right)m_1(t_{k-1}), \end{equation*}
which is the induction step to prove the expression for $m_1(t)$.

The formula for the jump of the second moment $\Delta_j m_2(t_k)$ is shown like in \eqref{eq:m_1_jump_formula} to be
\begin{equation} \label{eq:m_2_jump_formula} \Delta_j m_2(t_k)=\frac{1}{N_0+k}|X_k|^2 -\frac{1}{N_0+k}m_2(t_k^-) .\end{equation}
Again, with the use of induction we prove the equation for $m_2(t)$. If we show it is true for $m_2(t_k)$, for every $k \geq 0$, then the $m_2(t)$ expression follows after the fact that $\dot{m}_2(t)=D(t)$ on $(t_n,t_{n+1})$. Indeed, for $k=0$ it holds trivially. Assume it holds also for $k-1$. Then, with the help of \eqref{eq:m_2_jump_formula} we get
\begin{align*} m_2(t_k)&=m_2(t_k^-)+\Delta_j m_2(t_k)= \Delta_j m_2(t_k)+\int_{t_{k-1}}^{t_k} D(s)\, ds +m_2(t_{k-1}) \\ &=\frac{1}{N_0+k}|X_k|^2 -\frac{1}{N_0+k}m_2(t_k^-)+ \int_{t_{k-1}}^{t_k} D(s)\, ds +m_2(t_{k-1}) \\ &= \frac{1}{N_0+k}|X_k|^2 +\left( 1-\frac{1}{N_0+k}\right)\left( \int_{t_{k-1}}^{t_k} D(s)\, ds +m_2(t_{k-1}) \right), \end{align*}
which is the formula for the induction step of $m_2(t_k)$.
\end{proof}
A direct result of Proposition \ref{thm:prop1} are the following formulas
\begin{corollary} Based on the moment expressions we obtained in Proposition \ref{thm:prop1}, for any $t \in [t_k,t_{k+1})$, we have \begin{align*} \mathbb{E}(|m_1(t)-m|^2)&=\frac{A+k \sigma^2}{(N_0+k)^2}\quad \text{where} \quad A= \left|\sum \limits_{j=1}^{N_0}(x_j^0 -m) \right|^2 , \\ \mathbb{E}(|m_1(t)|^2)&=\frac{B+ k \sigma^2 +2k C +k^2 |m|^2}{(N_0+k)^2} , \qquad B=\left|\sum \limits_{j=1}^{N_0} x_j^0  \right|^2 ,\quad  C= \sum \limits_{j=1}^{N_0} x_j^0 \cdot m .\end{align*}
This implies in particular that $\mathbb{E}(|m_1(t)-m|^2)=O(\frac{1}{k})=O \left(\frac{1}{N(t)}\right)$ as $k \to \infty$.
\end{corollary}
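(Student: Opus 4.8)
The plan is a direct second-moment computation from the closed form of $m_1(t)$ obtained in Proposition~\ref{thm:prop1}(ii). The key observation is that this formula for $m_1(t)$ involves only the deterministic data $x_1^0,\dots,x_{N_0}^0$ and the incoming opinions $X_1,\dots,X_k$ (and in particular not the solution of \eqref{eq:main}), so the expectation reduces to an expectation over the i.i.d. sequence $\{X_j\}_{j\geq1}$ alone. First I would center the sum: for $t\in[t_k,t_{k+1})$ set $a:=\sum_{j=1}^{N_0}(x_j^0-m)$, which is a fixed vector in $\mathbb{R}^d$, and $S_k:=\sum_{j=1}^{k}(X_j-m)$, a sum of independent mean-zero random vectors, so that
\begin{equation*} m_1(t)-m=\frac{1}{N_0+k}\bigl(a+S_k\bigr). \end{equation*}

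Next I would expand $|a+S_k|^2=|a|^2+2\,a\cdot S_k+|S_k|^2$ and take expectations. By linearity and $\mathbb{E}(S_k)=0$ the cross term vanishes. For the quadratic term, writing $|S_k|^2=\sum_{i,j=1}^{k}(X_i-m)\cdot(X_j-m)$ and using independence to kill the off-diagonal terms ($\mathbb{E}\bigl((X_i-m)\cdot(X_j-m)\bigr)=\mathbb{E}(X_i-m)\cdot\mathbb{E}(X_j-m)=0$ for $i\neq j$), only the $k$ diagonal terms remain, each equal to $\mathbb{E}(|X_j-m|^2)=\mathrm{Var}(X_j)=\sigma^2$ by the trace convention adopted for the variance of a random vector. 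Hence $\mathbb{E}(|S_k|^2)=k\sigma^2$, and dividing by $(N_0+k)^2$ gives the first identity with $A=|a|^2$. The second identity follows in the same way, starting instead from $m_1(t)=\frac{1}{N_0+k}\bigl(\sum_{j=1}^{N_0}x_j^0+\sum_{j=1}^{k}X_j\bigr)$, decomposing $\sum_{j=1}^k X_j=S_k+km$, expanding the square, and using $\mathbb{E}(S_k)=0$ and $\mathbb{E}(|S_k|^2)=k\sigma^2$ together with the definitions of $B$ and $C$.

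Finally, for the stated asymptotics I would simply bound $\frac{A+k\sigma^2}{(N_0+k)^2}\leq\frac{A}{k^2}+\frac{\sigma^2}{k}=O(1/k)$ as $k\to\infty$, and recall that $N(t)=N_0+k$ for $t\in[t_k,t_{k+1})$ so that $O(1/k)=O(1/N(t))$. I do not expect any real obstacle here: the computation is elementary once the randomness is isolated. The only points deserving attention are the bookkeeping that separates the deterministic initial block from the random incoming block, and the fact that the scalar identity $\mathbb{E}(|X_j-m|^2)=\sigma^2$ (rather than the full covariance matrix) is exactly what makes the coordinate-free cross-term argument go through cleanly.
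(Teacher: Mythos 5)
Your computation is correct and is exactly the direct verification the paper intends when it calls the corollary ``a direct result'' of Proposition \ref{thm:prop1}: center $m_1(t)-m=\frac{1}{N_0+k}(a+S_k)$, use independence to kill the off-diagonal and cross terms, and identify $\mathbb{E}(|S_k|^2)=k\sigma^2$ via the trace convention for the variance. The paper supplies no written proof, and your argument (including the $O(1/k)=O(1/N(t))$ bound) fills it in correctly with no gaps.
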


\subsection{The Variance functional}
We introduce the variance of the model which we define by
\begin{equation} \label{eq:Variance} \pazocal{V}(\{x(t)\}):=\frac{1}{N(t)}\sum_{j=1}^{N(t)}|x_j(t)-m_1(t)|^2, \quad  t \geq 0.\end{equation}
The following result gives the main estimates that we use in our proof
\begin{proposition} \label{thm:prop2} The expectation of the variance functional \eqref{eq:Variance} is a piecewise $C^1$ function s.t. \begin{equation} \label{eq:Variance_dissipation} \frac{d}{dt} \mathbb{E}(\pazocal{V}(\{x(t)\})) \leq -\psi_* \mathbb{E}(\pazocal{V}(\{x(t)\})), \qquad t \in (t_k, t_{k+1}), \quad k \geq 0 . \end{equation}
The following bounds for the jump discontinuities of $\mathbb{E}(\pazocal{V}(\{x(t)\}))$ at $t_k$ hold :
\begin{equation} \label{eq:Upper_Lower_bound}\begin{aligned}\Delta_j \mathbb{E}(\pazocal{V}(\{ x(t_k)\})) &\leq \frac{1}{N_0+k} \left( c_k \sigma^2- \mathbb{E}(\pazocal{V}(\{ x(t_k^-)\}))\right) +\frac{C}{(N_0+k)^2} , \\  \Delta_j \mathbb{E}(\pazocal{V}(\{ x(t_k)\})) & \geq \frac{1}{N_0+k} \left( c_k \sigma^2- \mathbb{E}(\pazocal{V}(\{ x(t_k^-)\}))\right) , \end{aligned} \end{equation}
for $c_k=\frac{(k+2N_0)(k-1)}{(N_0+k)^2}\xrightarrow{k \to \infty}1$, and $C=C(N_0, x^0, m,\sigma)$ is independent of $k$.
\end{proposition}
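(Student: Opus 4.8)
The plan is to base everything on the identity $\pazocal{V}(\{x(t)\})=m_2(t)-|m_1(t)|^2=\frac{1}{2N(t)^2}\sum_{i,j=1}^{N(t)}|x_i(t)-x_j(t)|^2$, obtained by expanding the square in \eqref{eq:Variance} and using \eqref{eq:moments}. On a fixed interval $(t_k,t_{k+1})$ the population is constant and $\dot m_1=0$, hence $\frac{d}{dt}\pazocal{V}(\{x(t)\})=\dot m_2=D(t)$; since $\psi\ge\psi_*$ and every $|x_i-x_j|^2\ge0$, the dissipation satisfies $D(t)\le-\psi_*\frac{1}{N^2}\sum_{i,j}|x_i-x_j|^2=-2\psi_*\pazocal{V}\le-\psi_*\pazocal{V}$. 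To pass to the expectation I would first record the a priori bound that on $[t_k,t_{k+1})$ every $x_j(t)$ remains in $\mathrm{conv}\{x_1^0,\dots,x_{N_0}^0,X_1,\dots,X_k\}$: by \eqref{eq:main} the support function $t\mapsto\max_i e\cdot x_i(t)$ is nonincreasing on every no-jump interval (the maximiser has nonpositive velocity along $e$), while a jump only adjoins one point to the hull, so the claim follows by induction on $k$. Consequently $\pazocal{V}(t)$ and $D(t)$ are bounded on compact subintervals by a fixed multiple of $\max(|x_1^0|^2,\dots,|x_{N_0}^0|^2,|X_1|^2,\dots,|X_k|^2)$, which is integrable since $\mathbb{E}(|X_j|^2)=\sigma^2+|m|^2<\infty$; dominated convergence then gives $\frac{d}{dt}\mathbb{E}(\pazocal{V})=\mathbb{E}(D(t))$, a continuous function of $t$ on $(t_k,t_{k+1})$, which is simultaneously the piecewise-$C^1$ assertion and \eqref{eq:Variance_dissipation}.

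For the jump at $t_k$, put $n:=N_0+k$, let $x_1,\dots,x_{n-1}$ denote the opinions at $t_k^-$ and $x_n(t_k)=X_k$ the incoming one. Isolating in $\sum_{i,j=1}^{n}|x_i-x_j|^2$ the terms containing the index $n$ gives $\sum_{i,j=1}^{n}|x_i-x_j|^2=\sum_{i,j=1}^{n-1}|x_i-x_j|^2+2R$ with $R:=\sum_{j=1}^{n-1}|X_k-x_j(t_k^-)|^2$, so the identity above yields the exact relation
\begin{equation*}
\Delta_j\pazocal{V}(\{x(t_k)\})=\frac{R}{n^2}-\frac{2n-1}{n^2}\,\pazocal{V}(\{x(t_k^-)\}).
\end{equation*}
Next I take expectations. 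Since $x_j(t_k^-)$ is a deterministic function of $(x^0,X_1,\dots,X_{k-1})$, it is independent of $X_k$; expanding $|X_k-x_j(t_k^-)|^2$ about $m$ and using $\mathbb{E}(X_k)=m$ removes the cross term, whence $\mathbb{E}(R)=(n-1)\sigma^2+\mathbb{E}\Big(\sum_{j=1}^{n-1}|x_j(t_k^-)-m|^2\Big)$. Centering the last sum at $m_1(t_k^-)$ (its cross term vanishes because $\sum_j(x_j-m_1)=0$) gives $\sum_{j=1}^{n-1}|x_j(t_k^-)-m|^2=(n-1)\pazocal{V}(\{x(t_k^-)\})+(n-1)|m_1(t_k^-)-m|^2$, and the corollary following Proposition~\ref{thm:prop1}, applied with $k-1$ in place of $k$, gives $\mathbb{E}(|m_1(t_k^-)-m|^2)=(A+(k-1)\sigma^2)/(n-1)^2$. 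Substituting back, the coefficient of $\mathbb{E}(\pazocal{V}(\{x(t_k^-)\}))$ collapses from $\frac{n-1}{n^2}-\frac{2n-1}{n^2}$ to $-\frac1n$, leaving
\begin{equation*}
\mathbb{E}\big(\Delta_j\pazocal{V}(\{x(t_k)\})\big)=\frac1n\Big(\tfrac{n-1}{n}\,\sigma^2-\mathbb{E}(\pazocal{V}(\{x(t_k^-)\}))\Big)+\frac{A+(k-1)\sigma^2}{n^2(n-1)}.
\end{equation*}

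The last step is the elementary identity $n(n-1)=(k+2N_0)(k-1)+N_0(N_0+1)$, which turns $\tfrac{n-1}{n}\sigma^2$ into $c_k\sigma^2+\tfrac{N_0(N_0+1)}{n^2}\sigma^2$ with $c_k=\frac{(k+2N_0)(k-1)}{(N_0+k)^2}\xrightarrow{k\to\infty}1$ exactly as stated; therefore
\begin{equation*}
\mathbb{E}\big(\Delta_j\pazocal{V}(\{x(t_k)\})\big)=\frac1n\big(c_k\sigma^2-\mathbb{E}(\pazocal{V}(\{x(t_k^-)\}))\big)+E_k,\qquad E_k:=\frac{N_0(N_0+1)\sigma^2}{n^3}+\frac{A+(k-1)\sigma^2}{n^2(n-1)}.
\end{equation*}
Every term of $E_k$ is nonnegative (here $n-1=N_0+k-1\ge1$ and $k\ge1$), which is the lower bound in \eqref{eq:Upper_Lower_bound}; using $k-1\le n-1$ and $n-1\ge1$ one gets $E_k\le(N_0(N_0+1)\sigma^2+\sigma^2+A)/n^2=:C/n^2$, with $C=C(N_0,x^0,m,\sigma)$ via $A=|\sum_{j=1}^{N_0}(x_j^0-m)|^2$, which is the upper bound. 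I expect the only step needing genuine care to be the justification of differentiating $\mathbb{E}(\pazocal{V})$ in $t$ through the convex-hull a priori bound; the rest is bookkeeping, with $n(n-1)=(k+2N_0)(k-1)+N_0(N_0+1)$ the one identity worth isolating, since it is exactly what produces the precise constant $c_k$.
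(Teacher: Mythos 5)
Your proof is correct, and I verified that your exact expression for $\mathbb{E}(\Delta_j\pazocal{V})$ agrees term-by-term with what the paper's computation yields (using $N_0^2+(n-1)=N_0(N_0+1)+(k-1)$), but you reach it by a genuinely different decomposition. The paper splits the sum defining $\pazocal{V}(t_k)$ around the new agent and the shifted mean, arriving at $\Delta_j\pazocal{V}=\frac{1}{n}|X_k-m_1(t_k)|^2+\frac{n-1}{n^3}|X_k-m_1(t_k^-)|^2-\frac{1}{n}\pazocal{V}(t_k^-)$, and then computes $\mathbb{E}|X_k-m_1(t_k)|^2$ by writing $m_1(t_k)-X_k$ explicitly as a sum of i.i.d.\ increments and evaluating the cross terms by hand; the two nonnegative remainders are then kept or dropped to get the upper and lower bounds respectively. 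You instead use the pairwise representation $\pazocal{V}=\frac{1}{2N^2}\sum_{i,j}|x_i-x_j|^2$ to get the clean exact relation $\Delta_j\pazocal{V}=\frac{R}{n^2}-\frac{2n-1}{n^2}\pazocal{V}(t_k^-)$, compute $\mathbb{E}(R)$ via independence of $X_k$ from the past plus the bias--variance split around $m_1(t_k^-)$ and $m$, and recycle the Corollary's formula for $\mathbb{E}|m_1(t_k^-)-m|^2$; the coefficient collapse $\frac{n-1}{n^2}-\frac{2n-1}{n^2}=-\frac{1}{n}$ and the identity $n(n-1)=(k+2N_0)(k-1)+N_0(N_0+1)$ then deliver $c_k$ with a single explicit nonnegative remainder $E_k$, from which both bounds in \eqref{eq:Upper_Lower_bound} follow at once. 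Your route buys a tidier bookkeeping (one exact formula instead of three separately estimated expectations), a sharper dissipation constant $-2\psi_*$ which you rightly note can be discarded, and a concrete justification of the derivative--expectation interchange via the convex-hull invariance of the dynamics, which the paper dismisses as "a simple exercise in analysis"; the paper's route has the minor advantage of not relying on the Corollary and of exhibiting directly which positive terms are being discarded in each bound.
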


\begin{proof}
We can write $\pazocal{V}(\{x(t)\})$ alternatively as $\pazocal{V}(\{x(t)\})=m_2(t)-m_1(t)^2$, so after differentiating in time we have $\dot{\pazocal{V}}(\{x(t)\})=D(t)$ on every $(t_k, t_{k+1})$ interval. Taking the expectation of this and the lower bound on the influence weight we get $ \mathbb{E}(\dot{\pazocal{V}}(\{x(t)\}))\leq -\psi_* \mathbb{E}(\pazocal{V}(\{x(t)\}))$. The last step, i.e. commuting the derivative with the integral to get \eqref{eq:Variance_dissipation} is a simple exercise in analysis.

The jump of $\pazocal{V}(\{x(t)\})$ at $t_k$, defined by $\Delta_j \pazocal{V}(\{x(t_k)\})=\pazocal{V}(\{x(t_k)\}) -\pazocal{V}(\{x(t_k^-)\})$ is
\begin{equation*}\begin{aligned} \Delta_j \pazocal{V}(\{x(t_k)\}) &=\frac{1}{N_0+k}\sum_{j=1}^{N_0+k} |x_j(t_k)-m_1(t_k)|^2
- \frac{1}{N_0+k-1} \sum_{j=1}^{N_0+k-1} |x_j(t_k^-)-m_1(t_k^-)|^2
\\ &=\frac{1}{N_0+k} |X_k-m_1(t_k)|^2 + \frac{1}{N_0+k} \sum_{j=1}^{N_0+k-1} |x_j(t_k)-m_1(t_k)|^2
\\ &-\frac{1}{N_0+k-1} \sum_{j=1}^{N_0+k-1} |x_j(t_k^-)-m_1(t_k^-)|^2 = \frac{1}{N_0+k} |X_k-m_1(t_k)|^2
\\ &+\frac{1}{N_0+k} \sum_{j=1}^{N_0+ k-1} |x_j(t_k)-m_1(t_k^-)-\Delta_j m_1(t_k)|^2 - \frac{1}{N_0+k-1} \sum_{j=1}^{N_0+k-1} |x_j(t_k^-)-m_1(t_k^-)|^2 \end{aligned} \end{equation*}
We treat the second term as
\begin{equation*} \frac{1}{N_0+k} \sum_{j=1}^{N_0+ k-1} |x_j(t_k)-m_1(t_k^-)-\Delta_j m_1(t_k)|^2 = \frac{1}{N_0+k}\sum_{j=1}^{N_0+ k-1} |x_j(t_k^-)-m_1(t_k^-)|^2 +\frac{N_0+k-1}{N_0+k}|\Delta_j m_1(t_k)|^2 .\end{equation*}
Hence, with the help of the definition of $m_1(t)$ and the formula \eqref{eq:m_1_jump_formula} for $\Delta_j m_1(t_k)$ we get
\begin{equation} \label{eq:DeltaV} \begin{aligned}
\Delta_j \pazocal{V}(\{x(t_k)\})&=\frac{1}{N_0+k} |X_k-m_1(t_k)|^2 +\frac{N_0+k-1}{(N_0+k)^3} |X_k -m_1(t_k^-)|^2 \\ &-\frac{1}{(N_0+k)(N_0+k-1)}\sum_{j=1}^{N_0+k-1} |x_j(t_k^-)-m_1(t_k^-)|^2 \\ &=\frac{1}{N_0+k} |X_k-m_1(t_k)|^2 +\frac{N_0+k-1}{(N_0+k)^3} |X_k -m_1(t_k^-)|^2 -\frac{1}{N_0+k}\pazocal{V}(\{x(t_k^-)\}) .
 \end{aligned}\end{equation}
Let's us first compute the term $|X_k-m_1(t_k)|^2$. We start by writing $ m_1(t_k)-X_k$ as \begin{equation} \label{eq:m_1-X_k} \begin{aligned} m_1(t_k)-X_k &=\frac{1}{N_0+k}\left( \sum_{j=1}^{N_0}x_j^0 +\sum_{j=1}^{k-1}X_j \right)+\frac{1}{N_0+k}X_k -X_k \\ &=\frac{1}{N_0+k}\left( \sum_{j=1}^{N_0}x_j^0 +\sum_{j=1}^{k-1}(X_j-X_k) -N_0 X_k \right) .\end{aligned}\end{equation}
Taking the square of \eqref{eq:m_1-X_k} yields
\begin{equation} \label{eq:|m_1-X_k|^2} \begin{aligned} |m_1(t_k)-X_k|^2&=\frac{1}{(N_0+k)^2}\left|\sum_{j=1}^{k-1}(X_j-X_k)\right|^2 + \frac{1}{(N_0+k)^2} \left|\sum_{j=1}^{N_0}x_j^0 -N_0 X_k \right|^2 \\ &+\frac{2}{(N_0+k)^2} \left( \sum_{j=1}^{N_0}x_j^0 -N_0 X_k \right) \cdot \left( \sum_{j=1}^{k-1}(X_j-X_k )\right) . \end{aligned} \end{equation}
We now compute the expectation of $\mathbb{E}(|m_1(t_k)-X_k|^2)$, keeping in mind that we want to bound it from below  as well. First, the expectation of $\left|\sum \limits_{j=1}^{k-1}(X_j-X_k)\right|^2$ is
\begin{equation} \label{eq:|X_j-X_k|^2}\begin{aligned}\mathbb{E} &\left( \left|\sum_{j=1}^{k-1} (X_j-X_k) \right|^2 \right) =\mathbb{E} \left(\sum_{i=1}^{k-1}(X_i-X_k) \cdot \sum_{j=1}^{k-1}(X_j-X_k)\right) =\sum_{i,j=1}^{k-1} \mathbb{E}((X_i-X_k)\cdot (X_j-X_k)) \\ &=\sum_{i,j=1}^{k-1}\mathbb{E}\left(X_i \cdot X_j -(X_i+X_j)\cdot X_k +|X_k|^2 \right) =k(k-1) \sigma^2 . \end{aligned} \end{equation}
We compute the expectation of the last term in \eqref{eq:|m_1-X_k|^2} directly, instead of controling it by some inequality e.g. Cauchy-Schwartz and Young. In fact we see that this term is of order $O(k)$, instead of $O(k^2)$,
\begin{equation} \label{eq:cross_term}\begin{aligned}&\mathbb{E} \left( \left( \sum_{j=1}^{N_0}x_j^0 -N_0 X_k \right) \cdot \left( \sum_{j=1}^{k-1}(X_j-X_k )\right)\right)=-N_0 \mathbb{E}\left(X_k \cdot \sum_{j=1}^{k-1} (X_j -X_k) \right) \\ &=-N_0 \left( \sum_{j=1}^{k-1} \mathbb{E}(X_k) \cdot \mathbb{E}(X_j) -\sum_{j=1}^{k-1}\mathbb{E}(|X_k|^2)\right)=N_0 (k-1) \sigma^2.\end{aligned}
\end{equation}
Considering also that the expectation of the second term in \eqref{eq:|m_1-X_k|^2} is bounded by $C_1/(N_0+k)^2$, for some $C_1(N_0, x^0, m,\sigma)>0$, and using \eqref{eq:|m_1-X_k|^2}-\eqref{eq:|X_j-X_k|^2}-\eqref{eq:cross_term} we get the upper bound
\begin{equation} \label{eq:Ineq1} \frac{1}{N_0+k}\mathbb{E}(|m_1(t_k)-X_k|^2) \leq \frac{(k+2N_0)(k-1)}{(N_0 +k)^3} \sigma^2+\frac{C_1}{(N_0+k)^3} .\end{equation}
We also have the strict lower bound (due to the positive middle term in \eqref{eq:|m_1-X_k|^2}) \begin{equation}  \label{eq:Ineq2} \frac{1}{N_0+k}\mathbb{E}(|m_1(t_k)-X_k|^2) \geq \frac{(k+2N_0)(k-1)}{(N_0 +k)^3} \sigma^2. \end{equation}
The expectation of the second term in \eqref{eq:DeltaV} has the following bound  for $C_2(N_0, x^0, m,\sigma)>0$
\begin{equation} \label{eq:Ineq3} \frac{N_0+k-1}{(N_0+k)^3}\mathbb{E}(|X_k-m_1(t_k^-)|^2)\leq \frac{C_2}{(N_0+k)^2} .\end{equation}
Finally, taking the expectation in \eqref{eq:DeltaV} and using \eqref{eq:Ineq1}-\eqref{eq:Ineq2}-\eqref{eq:Ineq3}  we get \eqref{eq:Upper_Lower_bound}, but with $\mathbb{E}(\Delta_j \pazocal{V}(\{x(t_k)\}))$ in the place of $\Delta_j \mathbb{E}(\pazocal{V}(\{ x(t_k)\}))$. These two expressions can be shown equal with the help of dominated convergence theorem and the regularity of $\pazocal{V}(\{ x(t_k)\})$ on $(t_n,t_{n+1})$.

\end{proof}

\section{Proof of Theorems}
\label{sec:Proofs}
In the lemma that follows we give control for a piecewise $C^1$ function that has a discrete number of jump discontinuities and also decreases at exponential rate everywhere else.
\begin{lemma} \label{lem:Main_lemma} Assume a nonnegative function $y(t):[0,\infty) \to \mathbb{R}_+$ with $y(0)=y_0$, which is piecewise $C^1$ at $(t_n,t_{n+1})$ with jump discontinuities at $t_n$, for $n \geq 1$. Assume also that $y'(t)\leq - \lambda y(t)$ on every $(t_n, t_{n+k})$ interval, for some $\lambda >0$. Also, for the discontinuities of $y(t)$ at $t_n$ we assume the bound
\begin{equation*} |\Delta_j y(t_n)|\leq g(n), \quad n \geq 1 ,\end{equation*}
for some function $g(n)$ defined on $\mathbb{N}$. Then we have that \begin{equation} \label{eq:y(t_n)Bound} y(t_n) \leq y_0 e^{-\lambda (t_n -t_0)}+\sum_{k=1}^{n}g(k)e^{-\lambda(t_n -t_k)}, \qquad \text{for} \quad n\geq 1 ,\end{equation}
and for any $t \in [t_n,t_{n+1})$ we have the bound
\begin{equation*} y(t)\leq y_0 e^{-\lambda (t-t_0)}+\sum_{k=1}^{n}g(k)e^{-\lambda(t -t_k)}.\end{equation*}
Moreover, it follows directly that if $\sum_{k=1}^{n}g(k)e^{-\lambda(t_n -t_k)}\xrightarrow{n \to \infty}0$, then $y(t)\xrightarrow{t \to \infty} 0$.\end{lemma}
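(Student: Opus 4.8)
The plan is to run a discrete Gr\"onwall argument: integrate the differential inequality across each continuity interval, extract a one-step recursion that links $y(t_n)$ to $y(t_{n-1})$ through the jump bound, and then iterate by induction on $n$.

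First I would integrate on each interval $[t_n,t_{n+1})$. Adopt the convention that $y$ is right-continuous, so $y(t_n)=y(t_n^+)$. On $[t_n,t_{n+1})$ the map $t\mapsto y(t)e^{\lambda t}$ is non-increasing, since its derivative $e^{\lambda t}\bigl(y'(t)+\lambda y(t)\bigr)$ is $\le 0$ by hypothesis and the piecewise $C^1$ regularity makes this legitimate up to the endpoints; hence $y(t)\le y(t_n)e^{-\lambda(t-t_n)}$ on that interval. Letting $t\uparrow t_{n+1}$ gives $y(t_{n+1}^-)\le y(t_n)e^{-\lambda(t_{n+1}-t_n)}$. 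Since $g\ge 0$ (it dominates an absolute value), the jump bound $|\Delta_j y(t_{n+1})|\le g(n+1)$ then yields the recursion
\begin{equation*} y(t_{n+1}) \;=\; y(t_{n+1}^-) + \Delta_j y(t_{n+1}) \;\le\; y(t_n)\,e^{-\lambda(t_{n+1}-t_n)} + g(n+1), \qquad n\ge 0 . \end{equation*}

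Next I would prove \eqref{eq:y(t_n)Bound} by induction. The case $n=1$ is the recursion with $y(t_0)=y_0$, noting $g(1)=g(1)e^{-\lambda(t_1-t_1)}$. For the step, substitute the bound for $y(t_{n-1})$ into the recursion and distribute the factor $e^{-\lambda(t_n-t_{n-1})}$ through the sum: each $e^{-\lambda(t_{n-1}-t_k)}$ becomes $e^{-\lambda(t_n-t_k)}$, the $y_0$ term becomes $y_0 e^{-\lambda(t_n-t_0)}$, and the leftover $g(n)=g(n)e^{-\lambda(t_n-t_n)}$ supplies the missing $k=n$ summand, giving \eqref{eq:y(t_n)Bound}. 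The bound for a general $t\in[t_n,t_{n+1})$ then follows by applying $y(t)\le y(t_n)e^{-\lambda(t-t_n)}$ once more and distributing $e^{-\lambda(t-t_n)}$ through \eqref{eq:y(t_n)Bound}, i.e. replacing each $t_n$ appearing in an exponent by $t$.

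For the concluding limit statement: given $t$, pick $n$ with $t\in[t_n,t_{n+1})$; then $t\ge t_n$ gives $e^{-\lambda(t-t_k)}\le e^{-\lambda(t_n-t_k)}$ for $k\le n$, so $0\le y(t)\le y_0 e^{-\lambda(t-t_0)} + \sum_{k=1}^n g(k)e^{-\lambda(t_n-t_k)}$. As $t\to\infty$ we have $n\to\infty$, the first term vanishes, and the sum vanishes by assumption, so $y(t)\to 0$ by squeezing. I do not expect a genuine obstacle here: the content is the standard integrating-factor estimate plus a telescoping induction, and the only care required is bookkeeping — fixing the right-continuity convention so that $y(t_n)$ is the post-jump value, checking the integrating-factor step is licensed by the piecewise $C^1$ assumption up to the interval endpoints, and carrying out the telescoping in the inductive step cleanly.
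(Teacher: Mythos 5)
Your proposal is correct and follows essentially the same route as the paper: an integrating-factor estimate on each continuity interval giving $y(t_{n+1}^-)\le y(t_n)e^{-\lambda(t_{n+1}-t_n)}$, the jump bound to pass across $t_{n+1}$, and a telescoping induction to obtain \eqref{eq:y(t_n)Bound}. The only difference is that you spell out the right-continuity convention and the nonnegativity of $g$ a bit more explicitly than the paper does, which is fine.
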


\begin{proof} The proof is once again a simple exercise on induction. Indeed, for $n=1$ we have that \begin{equation*}y(t_1)=\Delta_j y(t_1)+y(t_1^-) \leq g(1)+ y(t_1^-) \leq g(1) + y_0 e^{-\lambda(t_1-t_0)} . \end{equation*} So \eqref{eq:y(t_n)Bound} holds trivially for $n=1$. Assume that \eqref{eq:y(t_n)Bound} holds for $n-1$, then
\begin{align*} y(t_n)&=\Delta_j y(t_n) + y(t_n^-) \leq g(n)+y(t_{n-1})e^{-\lambda (t_n -t_{n-1})} \\ & \leq g(n)+\left( y_0 e^{-\lambda (t_{n-1} -t_0)}+\sum_{k=1}^{n-1}g(k)e^{-\lambda(t_{n-1} -t_k)} \right)e^{-\lambda (t_n -t_{n-1})} \\ &=y_0 e^{-\lambda (t_n -t_0)}+\sum_{k=1}^{n}g(k)e^{-\lambda(t_n -t_k)}. \end{align*}\end{proof}

\begin{remark} A similar result as in Lemma \ref{lem:Main_lemma} can be concluded for a piecewise $C^1$ function $y(t)$ that satisfies $y'(t) \geq -\lambda y(t)$ (for some $\lambda>0$) and with discontinuity jumps that are bounded below as in $\Delta_j y(t_n) \geq g(n)$ for $n \geq 1$. In this case we can adopt the proof of Lemma \ref{lem:Main_lemma} to show that the reverse inequality of \eqref{eq:y(t_n)Bound} holds. \end{remark}

\begin{proof}[Proof of Theorem 1] The proof of the first sufficient condition \eqref{eq:condition1} is a direct application of Lemma \ref{lem:Main_lemma} and Proposition \ref{thm:prop2}. Consider $y(t)=\mathbb{E}(\pazocal{V}(\{x(t)\}))$ and use Lemma \ref{lem:Main_lemma} with $\lambda=\psi_*$ and with the help of \eqref{eq:Upper_Lower_bound} we may choose $g(n)= \frac{1}{n}(\sigma^2 +\frac{C}{N_0^2})$ so the result follows.

In order to show the second part, we assume that \eqref{eq:condition2} holds and we also have average mean square convergence. Since we assumed $y(t_n) \xrightarrow{n \to \infty} 0$, we can choose some $\epsilon>0$ and  $N(\epsilon)$ s.t. $y(t_n)<\epsilon$, and $c_n >\frac{1}{2}$ for all $n \geq N(\epsilon)$ and also $\frac{1}{2}\sigma^2-\epsilon >0$ (since $\sigma>0$). We know that $y'(t) \geq -\psi_M y(t)$ on $(t_n,t_{n+1})$ and we also have that $\Delta_j y(t_k)\geq g(n)= \frac{1}{2n}\left( \frac{1}{2}\sigma^2 -\epsilon \right)$ for $n \geq N(\epsilon)$. Condition \eqref{eq:condition2} and Lemma \ref{lem:Main_lemma} implies that we don't have convergence of $y(t_n)$ to $0$ which is inconsistent with our initial assumption.

\end{proof}

\begin{proof}[Proof of Theorem 2]

By the definition of $t_j$ we have that $N(t_j)=N_0+j$. Also, $e^{{t_j}^\alpha} -1 \leq N(t_j)= \lfloor e^{{t_j}^\alpha} \rfloor \leq e^{{t_j}^\alpha}$.Thus, we get that $(\ln(j+N_0))^{\frac{1}{\alpha}} \leq t_j \leq (\ln(j+N_0+1))^{\frac{1}{\alpha}}$. For simplicity and from now on we use $p=\frac{1}{\alpha}$. It is evident that for large $t_j \gg 1$ we have that $t_j \sim(\ln j)^p$. We need to check how $e^{-\lambda(\ln n)^p}\sum_{k=1}^n \frac{1}{k}e^{\lambda (\ln k)^p}$ behaves as $n \to \infty$, for the various values of $\lambda>0$, i.e. check if one of conditions $(C1)$ or $(C2)$ holds. It will become evident that no matter the value of $\lambda$ either one of the two conditions holds, hence offering a sharp condition for every increase rate.

We start with the case $p=1$ (exponential growth!) and we have $e^{-\lambda(\ln n)^p}\sum \limits_{k=1}^n \frac{1}{k}e^{\lambda(\ln k)^p}=\frac{1}{n^\lambda}\sum \limits_{k=1}^n k^{\lambda-1}$. For $\lambda=1$ it is $\frac{1}{n}\sum \limits_{k=1}^n 1=1$. For $\lambda>1$, $f(t)=t^{\lambda-1}$ is inreasing and $\frac{1}{n^\lambda}\sum \limits_{k=1}^n k^{\lambda-1}>\frac{1}{n^\lambda}\int_0^n t^{\lambda-1}\, dt =\frac{1}{\lambda}$. Similarly, if $0<\lambda<1$, then $f(t)$ is decreasing and $\frac{1}{n^\lambda}\sum \limits_{k=1}^n k^{\lambda-1} > \frac{1}{n^\lambda}\int_0^n (t+1)^{\lambda-1} \, dt \xrightarrow{n \to \infty} \frac{1}{\lambda}$. As a result, in the case of exponential growth $(C2)$ is satisfied always.

To check the behavior for all other $p$ values we bound the sum (above or below) with the appropriate integral. For a start, note that when $p>1$ the function $f(t)=\frac{1}{t}e^{\lambda(\ln t)^p}$ is eventually increasing (for large enough $t$), and eventually decreasing when $p<1$. Thus, for $p>1$, we have that
\begin{equation*} e^{-\lambda(\ln n)^p}\int_{n_0-1}^n f(t+1) \, dt \geq e^{-\lambda(\ln n)^p} \sum_{k=n_0}^n f(k) , \end{equation*}
whereas for $p<1$ then \begin{equation*}  e^{-\lambda(\ln n)^p} \sum_{k=n_0}^n f(k) \geq e^{-\lambda(\ln n)^p}\int_{n_0-1}^n f(t+1) \, dt .\end{equation*} We compute the integral by substitung $u=\ln (t+1)$ i.e. \begin{equation*}  \int_{n_0-1}^{n} f(t+1) \, dt =\int_{\ln n_0}^{\ln (n+1)} e^{\lambda u^p} \, du .\end{equation*}
We mention here that the function $F(p,x)=e^{-\lambda x^p}\int_0^x e^{\lambda t^p} \, dt$ is the generalized Dawson integral function (see \cite{Dij}), for $p>0$. We can easily check with the help of a bit of elementary analysis (L' H\^opital's rule) that $F(p,x) \xrightarrow{x \to \infty} 0$ iff $p>1$. If we consider  $F(p,x)$ for $x=\ln n$ and $p=\frac{1}{\alpha}$ then it is implied that $e^{-\lambda(\ln n)^p}\sum_{k=1}^n \frac{1}{k}e^{\lambda(\ln k)^p} \xrightarrow{n \to \infty}0$ iff $p>1$.

\textbf{Convergence rate}: In terms of the convergence rate as $t \to \infty$, it is enough to show threre exists some large enough $N(\lambda,p,\beta^*)$ s.t. for $\beta^*<p-1$ \begin{equation*} e^{-\lambda(\ln n)^p}\sum_{k=1}^n \frac{1}{k}e^{\lambda(\ln k)^p} < \frac{1}{(\ln n)^{\beta^*}} \qquad \text{for} \quad n \geq N(\lambda,p,\beta^*).\end{equation*} This is proven by showing that $(\ln n)^{\beta^*}e^{-\lambda(\ln n)^p}\sum_{k=1}^n \frac{1}{k}e^{\lambda(\ln k)^p} \xrightarrow{n \to \infty}0$ for $\beta^*<p-1$. Since $t_n^{\alpha \beta^*}=(\ln n)^{\beta^*}$ and $\alpha \beta^* < 1-\alpha$ the result follows.
\end{proof}

E-MAIL: ioamarkou@iacm.forth.gr

\end{document}